\patchcmd\Gread@eps{\@inputcheck#1 }{\@inputcheck"#1"\relax}{}{}
\newtheorem{theorem}{Theorem}[section]
\newtheorem{proposition}[theorem]{Proposition}
\newtheorem{observation}[theorem]{Observation}
\newtheorem{lemma}[theorem]{Lemma}
\newtheorem{remark}[theorem]{Remark}
\newtheorem{definition}[theorem]{Definition}
\begin{document}

\def\nt{\noindent}

\title{Some resolving parameters with the minimum size for two specific graphs }

\author{	
\small Ali Zafari$^{1}$
\and  
\small Saeid Alikhani$^{2}$
}

%\cortext[*]{Corresponding author}

%\date{\today}

\maketitle

\begin{center}

$^1$Department of Mathematics, Faculty of Science,
Payame Noor University, P.O. Box 19395-4697, Tehran, Iran\\

\medskip
$^{2}$Department of Mathematical Sciences, Yazd University, 89195-741, Yazd, Iran\\

\medskip
{\tt zafari.math@pnu.ac.ir~~ alikhani@yazd.ac.ir}
\end{center}

%%%%%%%%%%%%%%%%%%%%%%%%%%%%%%%%%%%%%%%%%%%%%%%%%%%%%%%%%%%%%%%%%%%%%%%%%%%%%%%%%%%%%%%%%%%%%%%%%%%%%%%%%%%%%%%%
%%%%%%%%%%%%%%%%%%%%%%%%%%%%%%%%%%%%%%%%%%==========Abstract==========%%%%%%%%%%%%%%%%%%%%%%%%%%%%%%%%%%%%%%%%%%
\begin{abstract}
A resolving set for a graph $G$ is a set of vertices $Q = \{q_1, ..., q_k\}$ such that, for all $p\in V(G)$ 
the $k$-tuple $(d(p, q_1), ..., d(p, q_k ))$ uniquely determines $p$,
where $d(p, q_i)$ is considered as the minimum length of a shortest path from $p$ to $q_i$ in graph $G$.
In this paper, we consider the computational study of some resolving sets with the minimum size for the $m$-cylinder graph $(C_n\Box P_k)\Box P_m$. 
The Boolean lattice $BL_n$, $n\geq 1$, is the graph whose vertex set is the set of all subsets of $[n]=\{1,2,...,n\}$,
where two subsets $X$ and $Y$ are adjacent if their symmetric difference has precisely
one element. In the graph $BL_n$, the layer $L_i$ is the family of $i$-subsets of $[n]$. The subgraph $BL_n(i,i+1)$ is the subgraph of $BL_n$ induced by layers $L_i$ and $L_{i+1}$.
Usually the graph $BL_n(1,2)$ is denoted by $H(n)$. We study the minimum size of a resolving set, doubly resolving set and strong resolving set for the graph $L(n)$, which is the line graph of $H(n)$.
\end{abstract}
%%%%%%%%%%%%%%%%%%%%%%%%%%%%%%%%%%%%%%%%%%======== Keywords==========%%%%%%%%%%%%%%%%%%%%%%%%%%%%%%%%%%%%%%%%%%

\noindent{\bf Keywords:}   resolving set; doubly resolving set; strong resolving set; line graph.
  
\medskip
\noindent{\bf AMS Subj.\ Class.:} 05C12, 05C76.

%%%%%%%%%%%%%%%%%%%%%%%%%%%%%%%%%%%%%%%%%%%%%%%%%%%%%%%%%%%%%%%%%%%%%%%%%%%%%%%%%%%%%%%%%%%%%%%%%%%%%%%%%%%%%%%%
%%%%%%%%%%%%%%%%%%%%%%%%%%%%%%%%%%%%%%%%%%==========Introduction==========%%%%%%%%%%%%%%%%%%%%%%%%%%%%%%%%%%%%%%

\section{Introduction}
\label{sec:introduction}
All graphs considered in this paper are assumed to be finite and connected. For notation and terminology not defined here,
see 
~\cite{pap-cg-1}.
A graphical representation of a vertex $p$ of a connected graph $G$ relative to an arranged subset $Q = \{q_1, ..., q_k\}$ of vertices of $G$ is defined as the $k$-tuple $(d(p, q_1), ..., d(p, q_k ))$, and this $k$-tuple is denoted by $r(p|Q)$, where $d(p, q_i)$ is considered as the minimum length of a shortest path from $p$ to $q_i$ in the graph $G$. If any vertices $p$ and $q$ that belong to  $V(G)\setminus Q$  have various representations with respect to the set $Q$, then $Q$ is called a resolving set for $G$ 
\cite{pap-psb.gh1-2}.
Slater 
\cite{pap-pjs-1}
considered the concept and notation of the metric dimension problem under the term locating set. Also, Harary and Melter 
\cite{pap-fh-1}
considered these problems under the term metric dimension as follows.
A resolving set of vertices of graph $G$ with the minimum size or cardinality is called the metric dimension of $G$ and this minimum size denoted by $\beta(G)$. However, some results for determining the metric dimension of various families of graphs have been obtained,
such as Cayley digraphs of split metacyclic groups 
\cite{pap-M.Abas.ET.AL},
Mobius ladders  
\cite{pap-M.Ali.ET.AL},
regular bipartite graphs 
\cite{pap-M.Baca.ET.AL},
lexicographic product of graphs 
\cite{pap-mj-1},
certain families of Toeplitz graphs 
\cite{pap-J.B.Liu.ET.AL},
layer sun graph and line graph of the layer sun graph
\cite{pap-jb1-1},
and Crystal Cubic Carbon $CCC(n)$ 
\cite{pap-X.Zhang.ET.AL}.

In 2007 C\'{a}ceres et al. 
\cite{pap-jc-1}
considered the concept and notation of a doubly resolving set of graph $G$ as follows. For any two
vertices $u$ and $v$ in a graph $G$, we say that $u$ and $v$ are doubly resolved by $x, y \in V (G)$, if $d(u,x)-d(u,y)\neq d(v,x)-d(v,y)$,
and we can see that a subset $Q = \{q_1, ..., q_l\}$ of vertices of a graph $G$ is a doubly resolving set of $G$, if for any various vertices $x$ and $y$ in $G$ we have $r(x|Q)-r(y|Q)\neq\lambda I$, where $\lambda$ is an  integer, and $I$ denotes   the unit $l$- vector $(1,..., 1)$, see 
\cite{pap-aa.s.s1-2}.
The size of the smallest doubly resolving set of vertices in a graph $G$, is denoted by $\psi(G)$. For more results about doubly resolving set, see \cite{pap-chr-1,pap-mj-1.1,pap-az-1}.

The notion of a strong metric dimension problem set of vertices of graph $G$ was introduced by  Seb\"{o} and Tannier 
\cite{pap-as-1},
indeed introduced a more restricted invariant than the metric dimension and this was further investigated by Oellermann and Peters-Fransen 
\cite{pap-oro-1}.
A vertex $u$ of a graph $G$ is called maximally distant from a vertex $v$ of $G$, if for every $w\in N_G(u)$, we have  $d(v,w) \leq d(v, u)$, where $N_G(u)$ denote the set of neighbors that $u$ has in $G$. If $u$ is maximally distant from $v$ and $v$ is maximally distant from $u$, then $u$ and $v$ are said to be mutually maximally distant.
A set $Q\subseteq V(G)$  is called  strong resolving set of $G$, if for any various vertices $p$ and $q$ of $G$ there is a vertex of $Q$, say $r$  so that $p$ belongs to a shortest $q - r$ path or $q$ belongs to a shortest $p - r$ path.
A strong metric basis of $G$, denoted  by $sdim(G)$, is the size of the smallest strong resolving set of vertices in a graph $G$. For more results about this parameter, see 
~\cite{pap-J.Kratica.ET.AL,pap-Kuziak,pap-ja-1}.

The written papers about the resolving parameters for graphs, show that these parameters have a very important potential to solve a number of representative real life problems, which have been described in several works. For instance, they have been frequently used in graph theory, chemistry, coding theory, robotics and many other disciplines.  Note that these problems are NP-hard, see 
~\cite{pap-pj.jh-1,pap-X. Chen1-2,pap-Khuller,pap-J.Kratica.ET.AL1}.

 The Cartesian  product of two graphs $G$ and $H$, is denoted by $G \Box H$, is a graph with vertex set $V(G) \times V(H)$ and with edge set
$E(G \times H)$ so that $(g_1,h_1)(g_2,h_2) \in E(G \Box H)$, whenever $h_1 = h_2$ and $g_1g_2 \in E(G)$, or $g_1 = g_2$ and $h_1h_2 \in E(H)$. 
We denote by $C_n$ and $P_k$  the cycle on $n \geq 3$ and the path on $k\geq 3$ vertices, respectively.
Also, we use $C_n\Box P_k$ and $(C_n\Box P_k)\Box P_m$ to denote the cylinder graph and the $m$-cylinder graph, respectively.

The Boolean lattice $BL_n$, $n\geq 1$, is the graph whose vertex set is the set of all subsets of $[n]=\{1,2,...,n\}$,
where two subsets $X$ and $Y$ are adjacent if their symmetric difference has precisely
one element. In the graph $BL_n$, the layer $L_i$ is the family of $i$-subsets of $[n]$. The subgraph $BL_n(i,i+1)$ is the subgraph of $BL_n$ induced by layers $L_i$ and $L_{i+1}$.
Usually the graph $BL_n(1,2)$ is denoted by $H(n)$. We study the minimum size of a resolving set, doubly resolving set and strong resolving set for the graph $L(n)$, which is the line graph of $H(n)$ (see \cite{pap-sm-1}).

\medskip
In this paper, we consider the computational study of some resolving sets with the minimum size for the $m$-cylinder graph $(C_n\Box P_k)\Box P_m$ and the graph  $L(n)$. 
%%%%%%%%%%%%%%==================================================================%%%%%%%%%%%%%%%%
%%%%%%%%%%%%%%==================================================================%%%%%%%%%%%%%%%%
%%%%%%%%%%%%%%==========================Main Results============================%%%%%%%%%%%%%%%%
\section{Results for the $m$-cylinder graph $(C_n\Box P_k)\Box P_m$ }

 In this section, we consider a class of the form  $(C_n\Box P_k)\Box P_m$, which we call it  $m$-cylinder graph and we compute
some resolving sets for the $m$-cylinder graph $(C_n\Box P_k)\Box P_m$. In fact,
some resolving parameters of the cylinder graph $C_n\Box P_k$ have been calculated.
Detailed information and further references concerning the resolving parameters of the cylinder graph $C_n\Box P_k$ can  be found in 
\cite{pap-jc-1,pap-skbp,pap-kn-1,pap-ja-1}. We compute some resolving parameters of the cylinder graph $C_n\Box P_k$ with another approach. 
Also, we study some resolving sets for the $m$-cylinder graph $(C_n\Box P_k)\Box P_m$.
For more results of families of graphs with constant metric, see
\cite{pap-M.Ahmad.ET.AL,pap-mi-1}.
 
 %%%%%%%%%%%%%%==================================================================%%%%%%%%%%%%%%%%
%%%%%%%%%%%%%%==========================Observation==============================%%%%%%%%%%%%%%%%
\begin{observation}{\rm\cite{pap-jc-1}}
For  any connected graph $G$ and the path $P_n$ with $n$ vertices,
$\beta(G \Box P_n)\leq \beta(G)+1$.
\end{observation}

In the following, we construct a class of graphs so that this class is isomorphic to $(C_n\Box P_k)$, and call this graph cylinder graph.

 %%%%%%%%%%%%%%==================================================================%%%%%%%%%%%%%%%%
%%%%%%%%%%%%%%==========================Definition===============================%%%%%%%%%%%%%%%%
\begin{definition}
Let $n$ and $k$ be  fixed positive integers, with $n,k\geq3$  and $[n]=\{1, ..., n\}$.
Suppose that $G$ is a graph with vertex set $\{x_1,  ..., x_{nk}\}$ on the layers $V_1, V_2, ..., V_k$, where $V_p=\{x_{(p-1)n+1}, x_{(p-1)n+2}, ..., x_{(p-1)n+n}\}$ for $p\in\{1,...,k\}$,
and the edge set of graph $G$ is
\begin{align*}
E(G)&=\{x_ix_j\,|\, x_i, x_j\in V_p,  1\leq i<j\leq nk, j-i=1 \text{or}  ~   j-i=n-1  \}\\
&\cup\{x_ix_j\,|\, x_i\in V_q, x_j\in V_{q+1},  1\leq i<j\leq nk,  1\leq q\leq k-1, j-i=n  \}.
\end{align*}
We can see that this graph is isomorphic to the cylinder graph $C_n\Box P_k$. So, we can assume throughout this article
$V(C_n\Box P_k)=\{x_1,  ..., x_{nk}\}$, and we use $V_p$ to denote a layer of cylinder graph $C_n\Box P_k$, where $V_p$, is defined already.
\end{definition}

%%%%%%%%%%%%%%==================================================================%%%%%%%%%%%%%%%%
%%%%%%%%%%%%%%==========================Remark==================================%%%%%%%%%%%%%%%% 
Now, we give a more elaborate description of cylinder graph $C_n\Box P_k$,
that are required to prove of Theorems.
\begin{remark}
 Let $e$ and $d$ be positive integers with $1\leq e<d\leq nk$. We say that two vertices $x_e$ and  $x_d$ in $C_n\Box P_k$ are compatible, if $n|d-e$.

\end{remark}

In the following, we construct a class of graphs so that this class is isomorphic to $(C_n\Box P_k)\Box P_m$, and call this graph $m$-cylinder graph.

%%%%%%%%%%%%%%==================================================================%%%%%%%%%%%%%%%%
%%%%%%%%%%%%%%==========================Definition==============================%%%%%%%%%%%%%%%%
\begin{definition}
Let $m\geq2$ be an integer. Suppose $1\leq i\leq m$ and consider $i^{th}$ copy of cylinder graph $C_n\Box P_k$ with the vertex set
$\{x_1^{(i)},  ..., x_{nk}^{(i)}\}$ on the layers $V_1^{(i)}, V_2^{(i)}, ..., V_k^{(i)}$, where it can be defined $V_p^{(i)}$ as similar $V_p$ on the vertex set $\{x_1^{(i)},  ..., x_{nk}^{(i)}\}$. Also, suppose that $K$ is a graph with vertex set
$\{x_1^{(1)},  ..., x_{nk}^{(1)}\}\cup ...\cup \{x_1^{(m)},  ..., x_{nk}^{(m)}\}$ so that the vertex $x_t^{(r)}$ is adjacent to the vertex $x_t^{(r+1)}$ in $K$, for $1\leq t \leq nk$, and $1\leq r\leq m-1$. We can see that the graph $K$ is isomorphic to $(C_n\Box P_k)\Box P_m$, and we call this graph $m$-cylinder graph.
\end{definition}

%%%%%%%%%%%%%%==================================================================%%%%%%%%%%%%%%%%
%%%%%%%%%%%%%%==========================Remark==================================%%%%%%%%%%%%%%%% 
\begin{remark}
Let $e$ and $d$ be positive integers with $1\leq e<d\leq nk$. We say that two vertices $x_e^{(i)}$ and  $x_d^{(i)}$ in $i^{th}$ copy of $C_n\Box P_k$ are compatible, if $n|d-e$. 
\end{remark}
%%%%%%%%%%%%%%%%===============================================%%%%%%%%%%%%%%
%%%%%%%%%%%%%%%%================Therrem 2.1====================%%%%%%%%%%%%%%
\begin{theorem}\label{dd.1}
If $n\geq3$ is an odd integer, then the minimum size of a doubly resolving set of vertices for the cylinder graph $C_n\Box P_k$ is $3$.
\end{theorem}
\begin{proof}
We first consider the cylinder graph $C_n\Box P_k$ with the vertex set  $\{x_1,  ..., x_{nk}\}$ on the layers $V_1, V_2, ..., V_k$. Based on \cite{pap-jc-1}, if $n$ is an odd integer, then the minimum size of a resolving set in $C_n\Box P_k$ is $2$. In particular,
we can show that if $n$ is an odd integer, then all the elements of every minimum resolving set of vertices in $C_n\Box P_k$ must lie in
exactly one of the layers $V_1$ or $V_k$. Without lack of theory if we consider the layer $V_1$ of $C_n\Box P_k$, then we can show that all the minimum resolving sets of vertices in the layer $V_1$ of $C_n\Box P_k$ are the sets in the  forms
$M_i=\{x_i, x_{\lceil\frac{n}{2}\rceil+i-1}\}$, $1\leq i\leq \lceil\frac{n}{2}\rceil$ and $N_j=\{x_j, x_{\lceil\frac{n}{2}\rceil+j}\}$,
$1\leq j\leq \lfloor\frac{n}{2}\rfloor$. On the other hand, the arranged subsets $M_i$ and  $N_j$ of vertices in $C_n\Box P_k$ cannot be doubly resolving sets for the cylinder graph $C_n\Box P_k$, because for $1\leq i\leq \lceil\frac{n}{2}\rceil$ and two compatible vertices $x_{i+n}$ and $x_{i+2n}$ with respect to $x_i$, we have $r(x_{i+n}|M_i)-r(x_{i+2n}|M_i)=-I$, where $I$ denotes  the unit $2$-vector $(1, 1)$. It is also clear that for $1\leq j\leq \lfloor\frac{n}{2}\rfloor$ the arranged subsets $N_j$ of vertices in $C_n\Box P_k$ cannot be doubly resolving sets for the cylinder graph $C_n\Box P_k$, and so the minimum size of a doubly resolving set in $C_n\Box P_k$ is greater than $2$. Now, we claim that the minimum size of a doubly resolving set of vertices in $C_n\Box P_k$ is $3$. So it is  enough to find a doubly resolving set of vertices in $C_n\Box P_k$ with the size $3$. For each $1\leq i\leq \lceil\frac{n}{2}\rceil$, if $A_i=M_i\cup x_c=\{x_i, x_{\lceil\frac{n}{2}\rceil+i-1}, x_c\}$ is an arranged subset of vertices in $C_n\Box P_k$, where $x_c$ lies in the layer $V_k$ of $C_n\Box P_k$ and $x_c$ is a compatible vertex with respect to $x_i$, then
$A_i$ is a doubly resolving set of vertices in $C_n\Box P_k$, because it is sufficient to show that for any compatible vertices $x_e$ and $x_d$ in $C_n\Box P_k$, $r(x_e|A_i)-r(x_d|A_i)\neq\lambda I$. Suppose $x_e\in V_p$ and $x_d\in V_q$ are compatible vertices in
$C_n\Box P_k$, $1\leq p<q\leq k$. So, $r(x_e|M_i)-r(x_d|M_i)=-\lambda I$, where $\lambda$ is a positive integer, and $I$ indicates the unit $2$-vector $(1, 1)$. We also see that for the compatible vertex $x_c$ with respect to $x_i$,  $r(x_e|x_c)-r(x_d|x_c)=\lambda$. Therefore, $r(x_e|A_i)-r(x_d|A_i)\neq\lambda I$, where $I$ indicates the unit $3$-vector $(1, 1, 1)$, and the claim is proved. \newline
\end{proof}
%%%%%%%%%%%%%%%%===============================================%%%%%%%%%%%%%%
%%%%%%%%%%%%%%%%================Therrem 2.2====================%%%%%%%%%%%%%%
\begin{theorem}\label{dd.2}
If $n\geq3$ is an odd integer, then the minimum size of a doubly resolving set of vertices for the $m$-cylinder graph $(C_n\Box P_k)\Box P_m$ is $4$.
\end{theorem}
\begin{proof}
Suppose the $m$-cylinder graph $(C_n\Box P_k)\Box P_m$ is a graph with vertex set
$\{x_1^{(1)},  ..., x_{nk}^{(1)}\}\cup ...\cup \{x_1^{(m)},  ..., x_{nk}^{(m)}\}$
so that the vertex
$x_t^{(r)}$ is adjacent to $x_t^{(r+1)}$ in $(C_n\Box P_k)\Box P_m$, for  $1\leq t \leq nk$, and $1\leq r\leq m-1$. We also see that for any connected graph $G$ and the path $P_m$, $\beta(G \Box P_m)\leq \beta(G)+1$. So, by considering $G=(C_n\Box P_k)$ we have $\beta(G \Box P_m)\leq \beta(G)+1=3$. Moreover, it is not hard to see that
for $1\leq i\leq \lceil\frac{n}{2}\rceil$, and $1\leq j\leq \lfloor\frac{n}{2}\rfloor$, $1^{th}$ copies of the arranged sets
$A_i=\{x_i, x_{\lceil\frac{n}{2}\rceil+i-1}, x_c\}$ and $B_j=\{x_j, x_{\lceil\frac{n}{2}\rceil+j}, x_c\}$, denoted by the sets
$A_i^{(1)}=\{x_i^{(1)}, x_{\lceil\frac{n}{2}\rceil+i-1}^{(1)}, x_c^{(1)}\}$ and
$B_j^{(1)}=\{x_j^{(1)}, x_{\lceil\frac{n}{2}\rceil+j}^{(1)}, x_c^{(1)}\}$, respectively, are resolving sets with the minimum size for the $m$-cylinder graph $(C_n\Box P_k)\Box P_m$.
On the other hand for $1\leq i\leq \lceil\frac{n}{2}\rceil$ the arranged sets
$A_i^{(1)}=\{x_i^{(1)}, x_{\lceil\frac{n}{2}\rceil+i-1}^{(1)}, x_c^{(1)}\}$ of vertices of $(C_n\Box P_k)\Box P_m$, cannot be doubly resolving sets for the $m$-cylinder graph $(C_n\Box P_k)\Box P_m$, because for each vertex $x_t^{(m)}$ of $(C_n\Box P_k)\Box P_m$, we have
$$r(x_t^{(m)}|A_i^{(1)})=( d(x_t^{(1)}, x_i^{(1)})+(m-1), d(x_t^{(1)}, x_{\lceil\frac{n}{2}\rceil+i-1}^{(1)})+(m-1), d(x_t^{(1)}, x_{c}^{(1)})+(m-1)).$$
With the same way for $1\leq j\leq \lfloor\frac{n}{2}\rfloor$, by applying the same argument we can show that the arranged sets
$B_j^{(1)}=\{x_j^{(1)}, x_{\lceil\frac{n}{2}\rceil+j}^{(1)}, x_c^{(1)}\}$ of vertices of $(C_n\Box P_k)\Box P_m$ cannot be doubly resolving sets for the $m$-cylinder graph $(C_n\Box P_k)\Box P_m$.
So, the minimum size of a doubly resolving set of vertices in $(C_n\Box P_k)\Box P_m$ is greater than $3$.
Now, let $D_i=A_i^{(1)}\cup x_c^{(m)}=\{x_i^{(1)}, x_{\lceil\frac{n}{2}\rceil+i-1}^{(1)}, x_c^{(1)}, x_c^{(m)}\}$ be an arranged subset of vertices of $(C_n\Box P_k)\Box P_m$, where $x_c^{(m)}$ lies in the layer $V_k^{(m)}$ of $(C_n\Box P_k)\Box P_m$.
We show that the  arranged subset $D_i$ is a doubly resolving set of vertices in $(C_n\Box P_k)\Box P_m$.
It is  enough to show that for every two vertices $x_t^{(r)}$ and $x_t^{(s)}$,  $1\leq t\leq nk$, $1\leq r<s\leq m$, $r(x_t^{(r)}|D_i)-r(x_t^{(s)}|D_i)\neq -\lambda I$, where $I$ indicates the unit $4$-vector $(1, ..., 1)$ and $\lambda$ is a positive integer.
For this purpose, let the distance between two the vertices $x_t^{(r)}$ and $x_t^{(s)}$ in $(C_n\Box P_k)\Box P_m$ is $\lambda$, then
we can verify that, $r(x_t^{(r)}|A_i^{(1)})-r(x_t^{(s)}|A_i^{(1)})= - \lambda I$, where $I$ indicates the unit $3$-vector, and $r(x_t^{(t)}|x_c^{(m)})-r(x_t^{(s)}|x_c^{(m)})= \lambda$.  Thus the minimum size of a doubly resolving set of vertices in $(C_n\Box P_k)\Box P_m$  is $4$.
\end{proof}
%%%%%%%%%%%%%%%%===============================================%%%%%%%%%%%%%%
%%%%%%%%%%%%%%%%================Lemma 2.1======================%%%%%%%%%%%%%%
\begin{lemma}\label{dd.4}
If $n\geq 4$ is an even integer, then the minimum size of a doubly resolving set of vertices for the cylinder graph $C_n\Box P_k$ is $4$.
\end{lemma}
\begin{proof}
We first consider the cylinder graph $C_n\Box P_k$ with the vertex set $\{x_1,  ..., x_{nk}\}$ on the layers $V_1, V_2, ..., V_k$, is defined already.
Based on \cite{pap-jc-1}, if $n$ is an even integer, then the minimum size of a resolving set in $C_n\Box P_k$ is $3$. In particular,
using a similar argument in the proof of Theorem \ref{dd.1}, we can prove that any subset of vertices in $C_n\Box P_k$ of size $3$ cannot be a doubly resolving set for the cylinder graph $C_n\Box P_k$, and so the minimum size of a doubly resolving set of vertices for the cylinder graph $C_n\Box P_k$ is greater than $3$. Now, we claim that if $n$ is an even integer, then the minimum size of a doubly resolving set of vertices in $C_n\Box P_k$ is $4$. For this purpose, let $E_1$ be an arranged subset of vertices of $C_n\Box P_k$ so that $E_1$ is a minimal resolving set for the cylinder graph $C_n\Box P_k$ and all the elements of $E_1$ lie in exactly one of the layers $V_1$ or $V_k$. Without loss of generality if we consider  $E_1=\{x_1, x_{\frac{n}{2}}, x_{{\frac{n}{2}}+1} \}$, then we show that the arranged subset
$E_2=E_1\cup x_c=\{x_1, x_{\frac{n}{2}}, x_{{\frac{n}{2}}+1}, x_c\}$ of vertices in $C_n\Box P_k$,
where $x_c$ lies in the layer $V_k$ of $C_n\Box P_k$ and $x_c$ is a compatible vertex with respect to $x_1$, is one of the minimum  doubly resolving sets for the cylinder graph $C_n\Box P_k$. It is enough to show that for any compatible vertices $x_e$ and $x_d$ in $C_n\Box P_k$, $r(x_e|E_2)-r(x_d|E_2)\neq\lambda I$. Suppose $x_e\in V_p$ and $x_d\in V_q$ are compatible vertices in $C_n\Box P_k$, $1\leq p<q\leq k$. Hence, $r(x_e|E_1)-r(x_d|E_1)=-\lambda I$, where $\lambda$ is a positive integer, and $I$ indicates the unit $3$-vector $(1,1, 1)$. We also see that for the compatible vertex $x_c$ with respect to $x_1$, $r(x_e|x_c)-r(x_d|x_c)=\lambda$.
So, $r(x_e|E_2)-r(x_d|E_2)\neq\lambda I$, where $I$ indicates the unit $4$-vector $(1,..., 1)$, and the claim is proved.
\end{proof}
%%%%%%%%%%%%%%%%===============================================%%%%%%%%%%%%%%
%%%%%%%%%%%%%%%%================Therrem 2.3====================%%%%%%%%%%%%%%
\begin{theorem}\label{dd.5}
If $n\geq4$ is an even integer, then the minimum size of a resolving set of vertices for the $m$-cylinder graph $(C_n\Box P_k)\Box P_m$ is $4$.
\end{theorem}
\begin{proof}
Suppose the $m$-cylinder graph $(C_n\Box P_k)\Box P_m$ is a graph with vertex set
$\{x_1^{(1)},  ..., x_{nk}^{(1)}\}\cup ...\cup \{x_1^{(m)},  ..., x_{nk}^{(m)}\}$
so that  the vertex $x_t^{(r)}$ is adjacent to $x_t^{(r+1)}$ in $(C_n\Box P_k)\Box P_m$, for  $1\leq t \leq nk$, and $1\leq r\leq m-1$.
Therefore, none of the minimal resolving sets of $C_n\Box P_k$ cannot be a resolving set for the $m$-cylinder graph $(C_n\Box P_k)\Box P_m$. So, the  minimum size of a resolving set of vertices in $(C_n\Box P_k)\Box P_m$ is greater than $3$. Now, we show that the  minimum size of a resolving set of vertices in $(C_n\Box P_k)\Box P_m$ is $4$. Let $x_1^{(1)}$ be a vertex in the layer $V_1^{(1)}$ of $(C_n\Box P_k)\Box P_m$ and $x_c^{(1)}$ be a compatible  vertex with respect to $x_1^{(1)}$, where $x_c^{(1)}$ lies in the layer $V_k^{(1)}$ of $(C_n\Box P_k)\Box P_m$. Based on Lemma \ref{dd.4}, we know that $1^{th}$ copy of the  arranged subset $E_2=\{x_1, x_{\frac{n}{2}}, x_{{\frac{n}{2}}+1}, x_c\}$, denoted by the set
$E_2^{(1)}=\{x_1^{(1)}, x_{\frac{n}{2}}^{(1)}, x_{{\frac{n}{2}}+1}^{(1)}, x_c^{(1)}\}$ is one of the minimum  doubly resolving sets for the cylinder graph $C_n\Box P_k$. Besides, the vertex $x_t^{(r)}$ is adjacent to the vertex $x_t^{(r+1)}$ in $(C_n\Box P_k)\Box P_m$,
and hence  the arranged set $E_2^{(1)}$  is one of  the resolving  sets for the $m$-cylinder graph $(C_n\Box P_k)\Box P_m$. Because for each vertex $x_t^{(i)}$ of $(C_n\Box P_k)\Box P_m$, we have
$$r(x_t^{(i)}|E_2^{(1)})=( d(x_t^{(1)}, x_1^{(1)})+i-1, d(x_t^{(1)}, x_{\frac{n}{2}}^{(1)})+i-1, d(x_t^{(1)}, x_{{\frac{n}{2}}+1}^{(1)})+i-1, d(x_t^{(1)}, x_{c}^{(1)})+i-1),$$
so all the vertices $\{x_1^{(1)},  ..., x_{nk}^{(1)}\}\cup ...\cup \{x_1^{(m)},  ..., x_{nk}^{(m)}\}$ of $(C_n\Box P_k)\Box P_m$  have various representations with respect to the set $E_2^{(1)}$. Thus the minimum size of a resolving set of vertices in $(C_n\Box P_k)\Box P_m$ is $4$.
\end{proof}
%%%%%%%%%%%%%%%%===============================================%%%%%%%%%%%%%%
%%%%%%%%%%%%%%%%================Therrem 2.4====================%%%%%%%%%%%%%%
\begin{theorem}\label{dd.6}
If $n\geq4$ is an even integer, then the minimum size of a doubly resolving set of vertices for the $m$-cylinder graph
$(C_n\Box P_k)\Box P_m$ is $5$.
\end{theorem}
\begin{proof}
Suppose the $m$-cylinder graph $(C_n\Box P_k)\Box P_m$ is a graph with vertex set
$\{x_1^{(1)},  ..., x_{nk}^{(1)}\}\cup ...\cup \{x_1^{(m)},  ..., x_{nk}^{(m)}\}$
so that  the vertex $x_t^{(r)}$ is adjacent to $x_t^{(r+1)}$ in $(C_n\Box P_k)\Box P_m$, for  $1\leq t \leq nk$, and $1\leq r\leq m-1$.
By the proof of Theorem \ref{dd.5}, we know that the arranged set $E_2^{(1)}=\{x_1^{(1)}, x_{\frac{n}{2}}^{(1)}, x_{{\frac{n}{2}}+1}^{(1)}, x_c^{(1)}\}$ of vertices of $(C_n\Box P_k)\Box P_m$ is one of the resolving sets for the $m$-cylinder graph $(C_n\Box P_k)\Box P_m$, so that the arranged set $E_2^{(1)}$ cannot be a doubly resolving set for the $m$-cylinder graph $(C_n\Box P_k)\Box P_m$, and hence the minimum size of a doubly resolving set of vertices in
$(C_n\Box P_k)\Box P_m$ is greater than $4$.
Now, let $E_3=E_2^{(1)}\cup x_c^{(m)}=\{x_1^{(1)}, x_{\frac{n}{2}}^{(1)}, x_{{\frac{n}{2}}+1}^{(1)}, x_c^{(1)}, x_c^{(m)}\}$
be an arranged subset of vertices of $(C_n\Box P_k)\Box P_m$, where $x_c^{(m)}$  lies in the layer $V_k^{(m)}$ of $(C_n\Box P_k)\Box P_m$.
It is enough to show that for every two  vertices $x_t^{(r)}$ and $x_t^{(s)}$, $1\leq t\leq nk$, $1\leq r<s\leq m$, $r(x_t^{(r)}|E_3)-r(x_t^{(s)}|E_3)\neq -\lambda I$, where $I$ indicates the unit $5$-vector $(1, ..., 1)$ and $\lambda$ is a positive integer.
For this purpose, let the distance between two the vertices $x_t^{(r)}$ and $x_t^{(s)}$ in $(C_n\Box P_k)\Box P_m$ is $\lambda$. Then
we can verify that, $r(x_t^{(r)}|E_2^{(1)})-r(x_t^{(s)}|E_2^{(1)})= - \lambda I$, where $I$ indicates the unit $4$-vector, and $r(x_t^{(t)}|x_c^{(m)})-r(x_t^{(s)}|x_c^{(m)})= \lambda$. Therefore, the arranged subset $E_3$ is one of the minimum doubly resolving sets of vertices for the $m$-cylinder graph $(C_n\Box P_k)\Box P_m$. Thus the minimum size of a doubly resolving set of vertices in $(C_n\Box P_k)\Box P_m$  is $5$.
\end{proof}
%%%%%%%%%%%%%%%%===============================================%%%%%%%%%%%%%%
%%%%%%%%%%%%%%%%================Therrem 2.5====================%%%%%%%%%%%%%%
\begin{theorem}\label{dd.7}
If $n\geq 3$ is an integer,  then the minimum size of a strong resolving set  of vertices for the $m$-cylinder graph $(C_n\Box P_k)\Box P_m$ is  $2n$.
\end{theorem}
\begin{proof}
Suppose the $m$-cylinder graph $(C_n\Box P_k)\Box P_m$ is a graph with vertex set
$\{x_1^{(1)},  ..., x_{nk}^{(1)}\}\cup ...\cup \{x_1^{(m)},  ..., x_{nk}^{(m)}\}$
so that the vertex $x_t^{(r)}$ is adjacent to $x_t^{(r+1)}$ in $(C_n\Box P_k)\Box P_m$, for  $1\leq t \leq nk$, and $1\leq r\leq m-1$. We know that each  vertex of  the layer  $V_1^{(1)}$ is maximally distant from a vertex of  the layer $V_k^{(m)}$ and each vertex of  the layer  $V_k^{(m)}$  is maximally distant from a vertex of the layer $V_1^{(1)}$. In particular, each  vertex of  the layer  $V_1^{(m)}$ is maximally distant from a vertex of the layer
$V_k^{(1)}$ and each  vertex of  the layer $V_k^{(1)}$  is maximally distant from a vertex of  the layer $V_1^{(m)}$, and so the minimum size of a strong resolving set of vertices for the $m$-cylinder graph $(C_n\Box P_k)\Box P_m$ is greater than or equal to $2n$. Because it is well known that for every pair of mutually maximally distant vertices $u$ and $v$ of a connected graph $G$ and for every strong metric basis $S$ of $G$, it follows that $u\in S$ or $v\in S$. Suppose the set $\{x_1^{(1)},  ...,  x_n^{(1)}\}$ is an arranged subset of vertices in  the layer $V_1^{(1)}$ of
$(C_n\Box P_k)\Box P_m$ and suppose that the set $\{x_1^{(m)},  ...,  x_n^{(m)}\}$ is an arranged subset of vertices in  the layer $V_1^{(m)}$ of $(C_n\Box P_k)\Box P_m$. Now, let $T=\{x_1^{(1)},  ...,  x_n^{(1)}\}\cup \{x_1^{(m)},  ...,  x_n^{(m)}\}$ be an arranged subset of vertices  of  $(C_n\Box P_k)\Box P_m$. In the following cases we show that the arranged set $T$, is one of the minimum strong resolving sets of vertices for the $m$-cylinder graph $(C_n\Box P_k)\Box P_m$. For this purpose let $x_e^{(i)}$ and $x_d^{(j)}$ be two various vertices of
$(C_n\Box P_k)\Box P_m$, $1\leq i,j\leq m$, $1\leq e,d\leq nk$ and $1\leq r\leq n$.\\

Case 1. If $i=j$, then $x_e^{(i)}$ and $x_d^{(i)}$ lie in $i^{th}$ copy of $(C_n\Box P_k)$ with vertex set $\{x_1^{(i)},  ..., x_{nk}^{(i)}\}$ so that $i^{th}$ copy of $(C_n\Box P_k)$ is a subgraph of $(C_n\Box P_k)\Box P_m$. Since $i=j$, then we can assume that $e<d$, because $x_e^{(i)}$ and
$x_d^{(i)}$ are various vertices.\\

Case 1.1. If both vertices $x_e^{(i)}$ and $x_d^{(i)}$ are compatible in $i^{th}$ copy of $(C_n\Box P_k)$ relative to $x_r^{(i)}\in V_1^{(i)}$, then there is the vertex $x_r^{(1)}\in V_1^{(1)}\subset T$ so that $x_e^{(i)}$ belongs to a shortest $x_r^{(1)}-x_d^{(i)}$ path, say as
$x_r^{(1)},..., x_r^{(i)},..., x_e^{(i)}, ..., x_d^{(i)}$.\\

Case 1.2. Suppose both vertices $x_e^{(i)}$ and $x_d^{(i)}$ are not compatible in $i^{th}$ copy of $(C_n\Box P_k)$, and lie
in various layers  or lie in the same layer in $i^{th}$ copy of $(C_n\Box P_k)$, also let
$x_r^{(i)}\in V_1^{(i)}$, be a compatible vertex relative to $x_e^{(i)}$. Hence there is the vertex $x_r^{(m)}\in V_1^{(m)}\subset T$ so that
$x_e^{(i)}$ belongs to a shortest $x_r^{(m)}-x_d^{(i)}$ path, say as
$x_r^{(m)},..., x_r^{(i)},..., x_e^{(i)}, ..., x_d^{(i)}$.\\

Case 2. If $i\neq j$, then $x_e^{(i)}$ lies in $i^{th}$ copy of $(C_n\Box P_k)$ with vertex set $\{x_1^{(i)},  ..., x_{nk}^{(i)}\}$, and $x_d^{(j)}$ lies in $j^{th}$ copy of $(C_n\Box P_k)$ with vertex set $\{x_1^{(j)},  ..., x_{nk}^{(j)}\}$. In this case we can assume that $i<j$.\\

Case 2.1. If $e=d$ and $x_r^{(j)}\in V_1^{(j)}$  is a compatible vertex relative to $x_d^{(j)}$, then there is the vertex $x_r^{(m)}\in V_1^{(m)}\subset T$ so that $x_d^{(j)}$ belongs to a shortest $x_r^{(m)}-x_e^{(i)}$ path, say as
$x_r^{(m)},..., x_r^{(j)},..., x_d^{(j)}, ..., x_e^{(i)}$.\\

Case 2.2. If $e< d$, also $x_e^{(i)}$ and $x_d^{(j)}$ lie in various layers of $(C_n\Box P_k)\Box P_m$ or $x_e^{(i)}$ and $x_d^{(j)}$ lie in the same layer of $(C_n\Box P_k)\Box P_m$ and $x_r^{(i)}\in V_1^{(i)}$ is a compatible vertex relative to $x_e^{(i)}$, then there is the vertex
$x_r^{(1)}\in V_1^{(1)}\subset T$ so that  $x_e^{(i)}$ belongs to a shortest $x_r^{(1)}-x_d^{(j)}$ path, say as
$x_r^{(1)},..., x_r^{(i)},..., x_e^{(i)}, ..., x_d^{(j)}$.\\

Case 2.3. If $e> d$, also $x_e^{(i)}$ and $x_d^{(j)}$ lie in various layers of $(C_n\Box P_k)\Box P_m$ or $x_e^{(i)}$ and $x_d^{(j)}$ lie in the same layer of $(C_n\Box P_k)\Box P_m$ and $x_r^{(j)}\in V_1^{(j)}$ is a compatible vertex relative to  $x_d^{(j)}$, then there
is the vertex $x_r^{(m)}\in V_1^{(m)}\subset T$ so that  $x_d^{(j)}$ belongs to a shortest $x_r^{(m)}-x_e^{(i)}$ path, say as
$x_r^{(m)},..., x_r^{(j)},..., x_d^{(j)}, ..., x_e^{(i)}$.
\end{proof}

%%%%%%%%%%%%%%==================================================================%%%%%%%%%%%%%%%%
%%%%%%%%%%%%%%==================================================================%%%%%%%%%%%%%%%%
%%%%%%%%%%%%%%==========================Sec 2-2==================================%%%%%%%%%%%%%%%% 

\section{Results for the graph $L(n)$ }

Let  $n$  be a fixed positive integer, with $n\geq5$  and $[n]=\{1, ..., n\}$  and let $i,j \in[n], i\neq j, i<j,1\leq  i\leq n-1, 2\leq  j\leq n$. 
Suppose that $G$ is a graph with vertex set $W_1\cup ...\cup W_n$, where
$W_r=\{\{v_r, v_iv_j\}\,\ | \,\,  v_r=v_i \,\ \text{or} \,\ v_r=v_j \}$, for $1\leq r\leq n$. We say that two various vertices
$\{v_r, v_iv_j\}$ and $\{v_k, v_pv_q\}$ are adjacent in $G$ if and only if $v_r=v_k$ or $v_iv_j=v_pv_q$, that is $i=p$ and $j=q$. It is not hard to see that this family  of  graphs is isomorphic to the the graph $L(n)$, is defined in 
\cite{pap-sm-1}. Based on
\cite{pap-sm-1}
 we can see that $L(n)$ is a connected vertex transitive graph of valency $n-1$, with diameter $3$, and the order $n(n-1)$. It is easy to see that every $W_r$ is a  clique of size $n-1$ in the graph $L(n)$. We also undertake the necessary task of introducing some of the basic notation for this class of graphs. We say that two  cliques $W_r$ and $W_k$ are adjacent in $L(n)$, if there is a vertex in  clique $W_r$  so that this vertex is adjacent to exactly one vertex of  clique $W_k$, $r,k \in [n], r\neq k$. Also, for any  clique $W_r$  in $G=L(n)$ we use $N(W_r)=\bigcup_{w\in W_r}N_G(w)$ to denote the vertices in the all cliques $W_k$, say $w_k$,
$1\leq k\leq n$ and $k\neq r$ so that $w_k$ is adjacent to one vertex of  the clique $W_r$. In this section, we first  obtain an upper bound in the next Lemma for the resolving set of the graph $L(n)$, and we show that the minimum size of a resolving set in graph $L(n)$ is $n-2$, see Theorem \ref{l.2}. Moreover, we study some resolving sets with the minimum size for the graph $L(n)$. 
%%%%%%%%%%%%%%%%%===============================================%%%%%%%%%%%%%%
%%%%%%%%%%%%%%%%================Lemma 2.2=======================%%%%%%%%%%%%%%
\begin{lemma}\label{l.1}
Let $n$  be a fixed positive integer, with $n\geq5$. If $1\leq r\leq n$, then each subset of $N(W_r)$ of size at least $n-2$ can be a resolving set for $L(n)$.
\end{lemma}
\begin{proof}
Suppose that $V(L(n))=W_1\cup ...\cup W_n$, where the set
$W_r=\{\{v_r, v_iv_j\}\,\ | \,\,  v_r=v_i \,\ \text{or} \,\ v_r=v_j \}$ denote a  clique of size $n-1$ in the graph $L(n)$. We know that $N(W_r)$  indicate the vertices in the all  cliques $W_k$, say $w_k$, $1\leq k\leq n$ and $k\neq r$ so that $w_k$ is adjacent  to one vertex of  the  clique $W_r$, also we can see that the cardinality of $N(W_r)$ is $n-1$.
Since $L(n)$ is a vertex transitive graph, then without loss of generality we may consider the   clique $W_1$. So
$N(W_1)= \{y_2, ... , y_{n-1}, y_n\}$, where $y_k= \{v_k, v_1v_k\}\in W_k$. If we consider $C_1=N(W_1)-\{y_{n-1}, y_n\}= \{y_2, ... , y_{n-2}\}$, then
there are exactly two vertices $\{v_1, v_1v_{n-1}\}, \{v_1, v_1v_n\}\in W_1$ so that $r(\{v_1, v_1v_{n-1}\}|C_1)=r(\{v_1, v_1v_n\}|C_1)= (2,..., 2)$, and so we can verify that the set $C_1$ cannot be a resolving set for $L(n)$, and so
any subset of $N(W_r)$ of size  $n-3$ cannot be a resolving set for $L(n)$.
Now, we take $C_2=N(W_1)-y_n=\{y_2, ... , y_{n-1}\}$ and we show that all the vertices in $V(L(n))\setminus C_2$ have different representations relative to $C_2$. If we consider the vertex $\{v_1, v_1v_{n-1}\}\in W_1$, then the vertex $\{v_1, v_1v_{n-1}\}$ is adjacent to the vertex $y_{n-1}\in W_{n-1}$, and hence $r(\{v_1, v_1v_{n-1}\}|C_2)\neq r(\{v_1, v_1v_n\}|C_2)$. Also,  every vertex $w$ in the clique  $ W_1$ is adjacent to exactly a vertex of  each  clique  $W_j$, $2\leq j\leq n$. So, all the vertices $ w\in W_1$  have various metric representations  relative to the subset $C_2$. In particular, for every vertex  $w\in W_r$, $2\leq r\leq n-1$ so that $w\notin N(W_1)$ and each $y_s\in C_2 $, $2\leq s\leq n-1$, if $w, y_s$ lie in a  clique  $ W_s$, $2\leq s\leq n-1$, then we have $d(w, y_s)=1$; otherwise  $d(w, y_s)\geq 2$.
Moreover, all the vertices in the clique $W_n$ have various metric representations relative to the subset $C_2$, because for every vertex $w$ in the  clique $W_n$  such that  $w$ is not equal to the vertex $\{v_n, v_1v_n\}$ in the  clique $W_n$, there is  exactly one element
$y_s\in C_2$ such that $d(w, y_s)=2$; otherwise $d(w, y_s)>2$, $2\leq s\leq n-1$. Finally, for the vertex $y_n=\{v_n, v_1v_n\}$ in  the  clique $W_n$ and every element $y_s\in C_2$ we have $d(w, y_s)=3$. Thus the arranged  subset $C_2= \{y_2, ... , y_{n-1}\}$ of vertices in  $L(n)$ is a resolving set for  $L(n)$ of size  $n-2$, and so each subset of $N(W_r)$  of size  $n-2$ is a resolving set for $L(n)$. 
\end{proof}
%%%%%%%%%%%%%%%%%===============================================%%%%%%%%%%%%%%
%%%%%%%%%%%%%%%%================Theorem 2.7=====================%%%%%%%%%%%%%%
\begin{theorem}\label{l.2}
If $n\geq 5$ is a fixed positive integer, then the minimum size of a resolving set in graph $L(n)$ is $n-2$.
\end{theorem}
\begin{proof}
Suppose that $V(L(n))=W_1\cup ...\cup W_n$, where the set
$W_r=\{\{v_r, v_iv_j\}\,\ | \,\,  v_r=v_i \,\ \text{or} \,\ v_r=v_j \}$  indicate  a  clique of size $n-1$ in the graph $L(n)$ for $1\leq r\leq n$. Let $D_1=\{W_1, W_2, ..., W_{n-3}\}$ be a subset of vertices of $L(n)$ of size $(n-1)(n-3)$, consisting of some of the  cliques of $L(n)$ and let $D_2=\{W_{n-2}, W_{n-1}, W_{n}\}$ be a subset of vertices  of $L(n)$, consisting of exactly three  cliques of $L(n)$. Now, let $D_3$ be a subset of $D_2$, consisting of exactly one clique of $D_2$, say $W_{n}$, and let $D_3=\{W_n\}$. Thus there are exactly two distinct vertices in $D_3=\{W_n\}$ say $x$ and  $y$ so that $x$ is adjacent to a vertex of $W_{n-1}$ and $y$ is adjacent to a vertex of $W_{n-2}$, and hence the metric representations of two vertices $x$  and $y$ are identical relative to $D_1$. Therefore, the set  $D_1=\{W_1, W_2, ..., W_{n-3}\}$ cannot be a resolving set for the graph $L(n)$, and so, if $w_k$ is an arbitrary vertex of $L(n)$ such that $w_k\in W_k$  for $1\leq  k\leq n-3$, then the subset  $\{w_1, w_2, ..., w_{n-3}\}$ of vertices of  $L(n)$
cannot be a resolving set for graph $L(n)$. So the cardinality of a minimum resolving set for graph $L(n)$ must be greater
than or equal to $n-2$. In particular, based on the Lemma \ref{l.1}, the arranged  subset $C_2=N(W_1)-y_n=\{y_2, ... , y_{n-1}\}$ of vertices in  $L(n)$ is a resolving set for  $L(n)$ of size  $n-2$, and so the  minimum size of a resolving set in  graph $L(n)$ is  $n-2$.
\end{proof}
%%%%%%%%%%%%%%%%%===============================================%%%%%%%%%%%%%%
%%%%%%%%%%%%%%%%================Lemma 2.3=======================%%%%%%%%%%%%%%
\begin{lemma} \label{l.3}
Consider the graph $L(n)$ with vertex set $W_1\cup ...\cup W_n$ for $n\geq 5$. Any subset of $N(W_r)$ of size  $n-2$ cannot be a doubly resolving set for $L(n)$.
\end{lemma}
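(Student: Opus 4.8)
The plan is to show that for \emph{every} $(n-2)$-element subset $S\subseteq N(W_r)$ there exist two distinct vertices of $L(n)$ whose metric representations relative to $S$ differ by the all-ones vector, which directly contradicts the definition of a doubly resolving set. First I would reduce to a canonical configuration. Since $L(n)$ is vertex transitive it suffices to treat $r=1$; and since relabelling the indices $\{2,\dots,n\}$ induces automorphisms of $L(n)$ that fix the clique $W_1$ and permute $N(W_1)=\{y_2,\dots,y_n\}$ transitively (where $y_k=\{v_k,v_1v_k\}$, exactly as in the proof of Lemma~\ref{l.1}), I may assume the single deleted element is $y_n$, so that $S=\{y_2,\dots,y_{n-1}\}$ has size $n-2$.

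The witnessing pair I would use consists of the two flags lying on the single edge $v_1v_n$ that is invisible to $S$, namely $v=\{v_1,v_1v_n\}\in W_1$ and $u=\{v_n,v_1v_n\}=y_n\in W_n$; these are distinct and both lie outside $S$. The heart of the argument is the claim that $d(v,y_k)=2$ and $d(u,y_k)=3$ uniformly for every $k$ with $2\le k\le n-1$. For $d(v,y_k)$ I would note that $v$ and $y_k$ are non-adjacent, since they share neither the $V_1$-label ($v_1\neq v_k$) nor the edge ($v_1v_n\neq v_1v_k$), yet they have the common neighbour $\{v_1,v_1v_k\}$, so the distance is exactly $2$. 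For $d(u,y_k)$ I would describe the two neighbourhoods explicitly, $N(u)=(W_n\setminus\{u\})\cup\{v\}$ and $N(y_k)=(W_k\setminus\{y_k\})\cup\{\{v_1,v_1v_k\}\}$; because $W_n\cap W_k=\emptyset$ for $k\neq n$ and neither of the two extra vertices lies in the other neighbourhood, $u$ and $y_k$ have no common neighbour, so $d(u,y_k)\geq 3$, and equality holds because $\operatorname{diam}(L(n))=3$.

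Granting these computations, $r(v\mid S)=(2,\dots,2)$ and $r(u\mid S)=(3,\dots,3)$, so $r(u\mid S)-r(v\mid S)=(1,\dots,1)=I$ with $\lambda=1\neq 0$; hence $u$ and $v$ are not doubly resolved and $S$ cannot be a doubly resolving set for $L(n)$. The main obstacle I anticipate is the distance bookkeeping rather than any conceptual difficulty: one must confirm the uniformity of both distances across the entire range $2\le k\le n-1$ and, in particular, rule out every potential common neighbour of $u$ and $y_k$. This is precisely the step where the deletion of \emph{exactly one} element of $N(W_1)$ matters---if two elements were omitted the all-$2$ and all-$3$ patterns would collapse into an equality of representations, which is the failure of ordinary resolvability treated in Lemma~\ref{l.1}, whereas here the single deletion keeps the representations distinct but forces their difference to be constant, yielding the failure of double resolvability required by the statement.
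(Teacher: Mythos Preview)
Your proposal is correct and follows essentially the same route as the paper: reduce to $W_1$ by vertex transitivity, take $S=N(W_1)\setminus\{y_n\}$, and exhibit the pair $\{v_1,v_1v_n\}\in W_1$ and $y_n=\{v_n,v_1v_n\}\in W_n$, observing that their distances to every element of $S$ are uniformly $2$ and $3$ respectively. Your write-up is in fact more complete than the paper's, since you justify the reduction to the particular deleted element $y_n$ via the automorphisms permuting $\{2,\dots,n\}$ and you verify the two distance claims explicitly (common neighbour for the $2$, disjoint neighbourhoods plus $\operatorname{diam}=3$ for the $3$), whereas the paper simply asserts both distances.
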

\begin{proof}
Since $L(n)$ is a vertex transitive graph, then without loss of generality we may consider the  clique $W_1$.
So if we take $C_2=N(W_1)-y_n=\{y_2, ..., y_{n-1}\}$, where for $2\leq k\leq n$ we have  $y_k= \{v_k, v_1v_k\}\in V_k$, then
by Lemma \ref{l.1} and Theorem \ref{l.2}, the subset
$C_2=N(W_1)-y_n=\{y_2, ..., y_{n-1}\}$ of vertices in  $L(n)$ is a minimum resolving set for  $L(n)$ of size $n-2$.
Hence by considering the vertices $u=\{v_1, v_1v_n\}\in W_1$ and $y_n=\{v_n, v_1v_n\}\in W_n$, we see that
$d(u, r) - d(u, s) = d(y_n, r) - d(y_n, s)$ for elements $r, s \in C_2$,
because for each element $z\in C_2$ we have $d(u, z)=2$ and $d(y_n, z)=3$. Thus the subset
$C_2=N(W_1)-y_n=\{y_2, ..., y_{n-1}\}$ of vertices in  $L(n)$ cannot be a doubly resolving set for $L(n)$, and so
any subset  $N(W_r)$ of graph $L(n)$ of size  $n-2$ cannot be a doubly resolving set for $L(n)$.
\end{proof}
%%%%%%%%%%%%%%%%%===============================================%%%%%%%%%%%%%%
%%%%%%%%%%%%%%%%================Theorem 2.8=====================%%%%%%%%%%%%%%
\begin{theorem}\label{l.4}
If $n\geq 5$ is a fixed positive integer, then the minimum size of a doubly resolving set in graph $L(n)$ is $n-1$.
\end{theorem}
\begin{proof}
Suppose that $V(L(n))=W_1\cup ...\cup W_n$,
where $W_r=\{\{v_r, v_iv_j\}\,\ | \,\,  v_r=v_i \,\ \text{or} \,\ v_r=v_j \}$.
By Lemma \ref{l.1}, and Theorem \ref{l.2},  the subset
$C_2=N(W_1)-y_n=\{y_2, ..., y_{n-1}\}$ of vertices in  $L(n)$ is a minimum resolving set for  $L(n)$ of size $n-2$, where $y_k= \{v_k, v_1v_k\}\in W_k$ for $2\leq k\leq n$. Also, from Lemma \ref{l.3} we know that the subset $C_2$ is not a doubly resolving set for $L(n)$, and hence the  minimum size of a doubly resolving set in  $L(n)$ is  greater than or equal to $n-1$.
Now, if we take $C_3=N(W_1)= \{y_2, ... , y_{n-1}, y_n\}$,  where $y_k= \{v_k, v_1v_k\}\in W_k$, then based on Lemma \ref{l.1},
we know that the subset
$C_3=N(W_1)=\{y_2, ...,  y_{n-1}, y_n\}$ of vertices in $L(n)$ is a resolving set for $L(n)$ of size $n-1$. We show that $C_3$ is a doubly resolving set for $L(n)$. It will be enough to show that for any two various vertices $u$ and $v$ in $L(n)$, there exist elements $x$ and  $y$ from $C_3$ so that
$d(u, x) - d(u, y) \neq d(v, x) - d(v, y)$. Consider two vertices $u$ and $v$ in  $L(n)$.
Then the result can be deduced from  the following  cases: \newline
%%%%%%%%%%%%%%%%===============================================%%%%%%%%%%%%%%
%%%%%%%%%%%%%%%%===============================================%%%%%%%%%%%%%%

Case 1.
Suppose, both vertices $u$ and $v$ lie in the  clique $W_1$. Hence, there exists an element $x\in C_3$ so that $x\in W_r$ and  $x$ is adjacent to $u$, also, there exists an element $y\in C_3$ so that $y\in W_k$ and  $y$ is adjacent to $v$ for some $r,k\in [n]-1$, $r\neq k$; and hence
$-1=1-2=d(u, x) - d(u, y) \neq d(v, x) - d(v, y)=2-1=1$.\newline

Case 2.
Suppose, both vertices $u$ and $v$ lie in the  clique $W_r$, $r\in[n]-1$, so that $u, v\notin C_3$.  Hence, there exists an element $x\in C_3$ so that $x\in W_r$ and $d(u,x)=d(v, x)=1$, also there exists an element $y\in C_3$ so that $y\in W_k$, $r\neq k$, and  $d(u, y)=2$, $d(v, y)=3$ or $d(u, y)=3$, $d(v, y)=2$. Thus  $d(u, x) - d(u, y) \neq d(v, x) - d(v, y).$\newline

Case 3.
Suppose that $u$ and $v$  are two distinct vertices in $L(n)$ so that $u\in W_1$ and $v\in W_r$, $r\in[n]-1$. Hence $d(u,v)=t$, for $1\leq t\leq 3$. \newline

Case 3.1. If $t=1$, then  $v\in C_3$. So if we consider $x=v$ and $v\neq y\in C_3$, then we have  $d(u, x) - d(u, y) \neq d(v, x) - d(v, y)$.\newline

Case 3.2. If $t=2$, then  in this case may be $v\in C_3$ or $v\notin C_3$. If $v\in C_3$, then there  exists an element $x\in C_3$ so that $x\in W_k$,
$k\in [n]-1$, $r\neq k$ and $d(u, x)=1,  d(v, x)=3$. So if we consider $v=y$, then we have
$-1=1-2=d(u, x) - d(u, y) \neq d(v, x) - d(v, y)=3-0=3$.
If $v\notin C_3$,  then there exists an element $x\in W_r$ so that $x\in C_3$ and $d(u, x)=d(v, x)=1$, also there exists an element $y\in C_3$ so that
$y\in W_k$, $k\in[n]-\{1, r\}$, and $d(u, y)=2$, $d(v, y)=3$ or $d(u, y)=3$, $d(v, y)=2$, and hence we have
$d(u, x) - d(u, y) \neq d(v, x) - d(v, y)$.\newline

Case 3.3. If $t=3$, then there exists an element $x\in W_r$ so that $x\in C_3$ and $d(u, x)=2$, $d(v, x)=1$, also there exists an element $y\in C_3$ so that $y\in W_k$, $k\in[n]-\{1, r\}$, and  $d(u, y)=1$, $d(v, y)=3$, and hence we have
$d(u, x) - d(u, y) \neq d(v, x) - d(v, y)$.
\newline

Case 4.
Suppose that $u$ and $v$ are two distinct vertices in $L(n)$ so that $u\in W_r$ and $v\in W_k$, $r, k\in[n]-1$, $r\neq k$. If both two vertices
$u$ and  $v$ lie in $C_3$ or exactly one of them vertices lies in $C_3$, then there is nothing to prove. Now suppose that both two vertices
$u, v \notin C_3$. Hence there exist  elements  $x\in C_3$ and  $y\in C_3$ so that $x\in W_r$ and $y\in W_k$, and hence we have
$d(u, x) - d(u, y) \neq d(v, x) - d(v, y)$.\newline
\end{proof}
%%%%%%%%%%%%%%%%%===============================================%%%%%%%%%%%%%%
%%%%%%%%%%%%%%%%================Proposition 2.3=================%%%%%%%%%%%%%%
\begin{proposition}\label{l.5}
If $n\geq 5$ is a fixed positive integer, then for $1\leq r\leq n$, any set  $N(W_r)$ of size $n-1$ cannot be a strong resolving set for $L(n)$.
\end{proposition}
\begin{proof}
Suppose that $V(L(n))=W_1\cup...\cup W_n$, where the set
$W_r=\{\{v_r, v_iv_j\}\,\ | \,\,  v_r=v_i \,\ \text{or} \,\ v_r=v_j \}$ indicate a  clique of size $n-1$ in the graph $L(n)$ for $1\leq r\leq n$. Without loss of generality if we consider the  clique $W_1$ and we take
$C_3=N(W_1)= \{y_2, ... , y_{n-1}, y_n\}$, where for $2\leq k\leq n$ we have  $y_k= \{v_k, v_1v_k\}\in W_k$, then
By Lemma \ref{l.1},  we know that for the  clique $W_1$ in  $L(n)$, the subset
$C_3=N(W_1)=\{y_2, ...,  y_{n-1}, y_n\}$ of vertices in  $L(n)$ is a  resolving set for $L(n)$ of size $n-1$.
By considering various vertices $ w_1\in W_r$ and $w_2\in W_k$, $1 <r, k <n$, $r\neq k$, so that   $d(w_1, w_2)=3$ and $w_1, w_2\notin C_3$,  there is not a $y_r\in C_3$ so that $w_1$ belongs to a shortest $w_2 - y_r$ path or $w_2$ belongs to a shortest $w_1 - y_r$ path.
Thus the set  $C_3=N(W_1)=\{y_2, ...,  y_{n-1}, y_n\}$ cannot be a strong resolving set for  $L(n)$, and so
any  set $N(W_r)$ of graph $L(n)$  of size  $n-1$ cannot be a strong  resolving set for $L(n)$.\newline
\end{proof}
%%%%%%%%%%%%%%%%%===============================================%%%%%%%%%%%%%%
%%%%%%%%%%%%%%%%================Theorem 2.9=====================%%%%%%%%%%%%%%
\begin{theorem}\label{l.6}
If $n\geq $ is a fixed positive integer, then the minimum size of a strong resolving set in  graph $L(n)$ is $n(n-2)$.
\end{theorem}
\begin{proof}
Suppose that $V(L(n))=W_1\cup...\cup W_n$, where the set
$W_r=\{\{v_r, v_iv_j\}\,\ | \,\,  v_r=v_i \,\ \text{or} \,\ v_r=v_j \}$ indicate a  clique of size $n-1$ in graph $L(n)$ for $1\leq r\leq n$.
Without loss of generality if we consider the vertex $\{v_1, v_1v_2\}$ in the  clique $W_1$, then there are exactly $(n-2)$ vertices in any  cliques $W_3, W_4, ..., W_n$, so that the distance between the vertex $\{v_1, v_1v_2\}\in W_1$ and these vertices in any cliques
$W_3, W_4, ..., W_n$ is $3$, and hence these vertices must lie in every minimal strong resolving set of $L(n)$. Note that the cardinality of these vertices is $(n-2)(n-2)$. On the other hand if we take $C_3=N(W_1)= \{y_2, ... , y_{n-1}, y_n\}$, where for $2\leq k\leq n$ we have
$y_k= \{v_k, v_1v_k\}\in W_k$, then  the distance between two distinct vertices of  $N(W_1)$  is $3$, and so  $n-2$ vertices of $N(W_1)$ must lie in every minimal strong resolving set of $L(n)$, we may consider these vertices are $y_3, ... , y_{n-1}, y_n$. If we now consider  the  cliques $W_1$ and $W_2$, then there are exactly $(n-2)$ vertices in the clique $W_1$, so that the distance these vertices  from $(n-2)$ vertices in the  clique $W_2$ is $3$, and hence we may assume that $(n-2)$ vertices  of the  clique $W_2$ so that the distance between  these vertices from $(n-2)$ vertices  of $W_1$  is $3$, must lie in every minimal strong resolving set of $L(n)$. Thus the minimum size of a strong resolving set in the graph $L(n)$ is $n(n-2)$.
\end{proof}

%%%%%%%%%%%%%%%%===============================================%%%%%%%%%%%%%%
%\noindent \textbf{4   \,\ Conclusion}\\
%%%%%%%%%%%%%%%%===============================================%%%%%%%%%%%%%%
\bigskip
{\footnotesize
%\noindent \textbf{Ethics approval and consent to participate}\\
%Not applicable.\\[2mm]
%\noindent \textbf{Consent for publication}\\
%Not applicable.\\[2mm]

%\noindent \textbf{Data Availability}\\
%No data were used to support this study.\\[2mm]
%\noindent \textbf{Disclosure}\\
%An online preprint of this study has been published elsewhere
%[32].\\[2mm]
%\noindent \textbf{Conflicts of Interest}\\
%The authors declare that there are no conflicts of interest
%regarding the publication of this paper.\\[2mm]
\noindent \textbf{Acknowledgements}\\
We would like to express our gratitude to Prof. Jia-Bao Liu, who gave us help and insightful suggestions  provided a great improvement to our paper.  \\[2mm]
\noindent \textbf{Authors' informations}\\
Ali Zafari${}^{a}$(\textsc{Corresponding Author})
(\url{zafari.math@pnu.ac.ir;zafari.math.pu@gmail.com})\\
 Saeid Alikhani${}^b$
(\url{alikhani@yazd.ac.ir})\\
\noindent ${}^{a}$ Department of Mathematics, Faculty of Science,
Payame Noor University, P.O. Box 19395-4697, Tehran, Iran.\\
${}^{b}$ Department of Mathematical Sciences, Yazd University, 89195-741, Yazd, Iran.
%\bigskip

{\footnotesize
%%%%%%%%%%%%%%==================================================================%%%%%%%%%%%%%%%%
%%%%%%%%%%%%%%==================================================================%%%%%%%%%%%%%%%%
%\noindent \textbf{References}\\

\end{document}